\newtheorem{theorem}[subsection]{Theorem}
\newtheorem{proposition}[subsection]{Proposition}
\newtheorem{corollary}[subsection]{Corollary}
\theoremstyle{definition}
\newtheorem{definition}[subsection]{Definition}
\newtheorem{example}[subsection]{Example}
\theoremstyle{remark}
\newtheorem{remark}[subsection]{Remark}
\begin{document}

\title[Almost inner derivations of some nilpotent Leibniz Algebras]{Almost inner derivations of some nilpotent Leibniz Algebras}

\author{J.K.Adashev}
\address{Institute of Mathematics of Uzbek Academy of Sciences, Mirzo Ulugbek 81, 100170, Tashkent, Uzbekistan.}
\email{adashevjq@mail.ru}

\author{T.K.Kurbanbaev}
\address{Karakalpak state university, Ch.Abdirov 1, 230100, Nukus, Uzbekistan.}
\email{tuuelbay@mail.ru}

\begin{abstract}
We investigate almost inner derivations of some finite-dimensional nilpotent Leibniz algebras. We show the existence of almost inner derivations of Leibniz filiform non-Lie algebras differing from inner derivations, we also show that the almost inner derivations of some filiform Leibniz algebras containing filiform Lie algebras do not coincide with inner derivations.

\end{abstract}

\subjclass[2010]{17A32, 17A60, 17B10}
\keywords{Leibniz algebra, derivation, inner derivation, almost derivation}

\maketitle

\section{Introduction}

Lie algebra is an algebra satisfying the anticommutativity identity and the Jacobi identity. The derivations of finite-dimensional Lie algebras are a well-studied direction of the theory of Lie algebras. It should be noted that the space of all derivations of Lie algebras is also Lie algebra with respect to the commutator. In the set of derivations of Lie algebras, there exist subsets of the so-called inner derivations. Naturally, there is a question: in what classes of algebras do derivations exist? and which are not inner? For the semisimple Lie algebras the sets of inner derivations and derivations coincide \cite{Hump}.

Almost inner derivations of Lie algebras were introduced by C.S. Gordon and E.N. Wilson \cite{Gordon} in the study of isospectral deformations of compact manifolds. Gordon and Wilson wanted to construct not only finite families of isospectral nonisometric manifolds, but rather continuous families. They constructed isospectral but nonisometric compact Riemannian manifolds of the form $G/\Gamma$, with a simply connected exponential solvable Lie group $G$, and a discrete cocompact subgroup $\Gamma$ of $G$. For this construction, almost inner automorphisms
and almost inner derivations were crucial.

Gordon and Wilson considered not only almost-inner derivations, but they studied almost inner automorphisms of Lie groups. The concepts of "almost inner" automorphisms and derivations, almost homomorphisms or almost conjugate subgroups arise in many contexts in algebra, number theory and geometry. There are several other studies of related concepts, for example, local derivations, which are a generalization of almost inner derivations and automorphisms \cite{Ayupov0}, \cite{Ayupov01}.

In \cite{AK15} we initiated the study of derivation type maps on non-associative algebras, namely, we investigated so-called  2-local derivations on finite-dimensional Lie algebras, and showed an essential difference between semisimple and nilpotent Lie algebras is the behavior of their 2-local derivations. The present paper is devoted to local derivation on finite-dimensional Lie algebra over an algebraically closed field of characteristic zero.

Local derivation first was considered in 1990, Kadison \cite{Kad90} and Larson and Sourour \cite{LarSou}. Let $X$ be a Banach $A$-bimodule over a Banach algebra $A$, a linear mapping $\Delta:A\to X$ is said to be a \textit{local derivation} if for every $x$ in $A$ there exists a derivation $D_x :A\to X$, depending on $x$, satisfying $\Delta(x) = D_x (x).$

The main problems concerning this notion are to find conditions under which local derivations become derivations and to present examples of algebras with local derivations that are not derivations \cite{Bresar, Kad90, LarSou}. Kadison proves in \cite[Theorem A]{Kad90} that each continuous local derivation of a von Neumann algebra $M$ into a dual Banach $M$-bimodule is a derivation. This theorem gave rise to studies and several results on local derivations on C$^*$-algebras, culminating with a definitive contribution due to Johnson, which asserts that every continuous local derivation of a C$^*$-algebra $A$ into a Banach $A$-bimodule is a derivation \cite[Theorem 5.3]{John01}. Moreover in his paper, Johnson also gives an automatic continuity result by proving that local derivations of a C$^*$-algebra $A$ into a Banach $A$-bimodule $X$ are continuous even if not assumed a priori to be so (cf. \cite[Theorem 7.5]{John01}).

In the theory of Lie algebras, there is a theorem which says that in the finite-dimensional nilpotent Lie algebra there
are not inner (i.e. outer) derivations \cite{Jacobson2}. We give an example \ref{AidOth} to shows that that there exists 4-dimensional nilpotent Lie algebras, where any almost inner derivation is an outer derivation, and the converse is true also.
But this question is still open for the general case. In \cite{Burde} authors study almost inner derivations of some nilpotent Lie algebras. Prove the basic properties of almost inner derivations, calculate all almost inner derivations of Lie algebras for small dimensions. They also introduced the concept of fixed basis vectors for nilpotent Lie algebras defined by graphs and studied free nilpotent Lie algebras of the nilindex 2 and 3.

We recall that the study of almost-inner derivations of the Leibniz algebras is an open problem. Therefore in this paper we consider almost-inner derivations for some nilpotent Leibniz algebras. We prove the basic properties of almost inner derivations of the Leibniz algebras. We get almost all  inner derivations of four-dimensional nilpotent Leibniz algebras. The  study of the inner derivations of nilpotent Leibniz algebras is a very difficult problem. Therefore, we consider some subclasses of these nilpotent algebras. We study almost inner derivations of the null-filiform Leibniz algebras, and also consider almost inner derivations of the some filiform Leibniz algebras.

\section{Preliminaries}

\begin{definition}
An algebra $L$ over a field $F$ is called the \emph {Leibniz algebra} if for all $x,y,z\in L$ the Leibniz identity holds:
$$[x,[y,z]]=[[x,y],z]-[[x,z],y],$$
where $[ \ , \ ]$ is the multiplication in $L.$
\end{definition}

For an arbitrary Leibniz algebra $ L $, we define a sequence:
$$L^1=L, \,\ L^{k+1}=[L^k,L^1], \,\ k\geq 1.$$

The Leibniz algebra $L$ is said to be \emph {nilpotent} if there exists $s\in \mathbb{N}$ such that $L^s=0$. The minimal number $s$ with this property is called the \emph {nilpotency index} or \emph {nilindex} of the algebra $L$.

We recall that the Leibniz algebra is called

\emph{null-filiform}, if $dim L^i=(n+1)-i,$ $1 \leq i \leq n+1;$

\emph{filiform}, if $dim L^i=n-i,$ $2 \leq i \leq n$.

Let $L$ be a nilpotent Leibniz algebra with nilindex $s$.

We consider $L_i = L^i/L^{i+1}, \ 1\leq i \leq s-1$ and $grL = L_1\oplus L_2 \oplus ... \oplus L_{s-1}$. Then $[L_i, L_j] \subseteq L_{i+j}$ and we obtain the graded algebra $grL$.

\begin{definition} If the Leibniz algebra $L$ is isomorphic algebra $grL$, then $L$ is called naturally graded Leibniz algebra.
\end{definition}

For the Leibniz algebra $L$, we denote the \emph{right} and \emph{left} annihilators, respectively, as follows
$$Ann_r(L)=\{x\in L \ | \ [L,x]=0\}, \quad Ann_l(L)=\{x\in L \ | \ [x,L]=0\}.$$
We denote the \emph{center} of the algebra by $Cent(L)=Ann_r(L) \cap Ann_l(L). $

A linear map $d$ is called a \emph{derivation} of the Leibniz algebra $L$, if
$$d([x,y])= [d(x),y] + [x,d(y)].$$

We denote the space of all derivations by $Der(L)$.

For each $x \in L$, the operator $R_x: L \rightarrow L$ which is called the \emph{right multiplication}, such that $R_x(y)=[y,x], \ y\in L,$ is a derivation. This derivation is called \emph{an inner derivation} of $L$, and we denote the space of all inner derivations by $Inner(L)$.

\begin{definition}
The derivation $D \in Der(L)$ of the Leibniz algebra $L$ is called {almost inner derivation}, if $D(x) \in [x,L]$ $([x,L]\subseteq L)$ holds for all $x \in L$; in other words, there exists $a_x \in L$ such that $D(x)=[x,a_x].$
\end{definition}
The space of all almost inner derivations of $L$ is denoted by $AID(L).$

\begin{definition}
The derivation $D \in AID(L)$ of the Leibniz algebra $L$ is called \textbf{the right central almost inner derivation}, if there exists $x \in L$ such that the map $(D-R_x): L \rightarrow Ann_r(L)$.
\end{definition}
The space of right central almost inner derivations of $L$ is denoted by $RCAID(L)$, respectively.

\begin{definition} The derivation $D \in AID(L)$ of the Leibniz algebra $L$ is called \textbf{central almost inner derivation}, if there exists $x\in L$ such that the map $(D-R_x): L \rightarrow Cent(L).$
\end{definition}
The space of central almost inner derivations of $L$ is denoted by $CAID(L),$ respectively.

%\begin{definition}\label{def5} The derivation $D \in AID(L)$ of the Leibniz algebra $L$ is called:
%
%\textbf{right central almost inner derivation}, if there exists $x \in L$ such that the map $(D-R_x): L \rightarrow Ann_l(L)$.
%
%%\textbf{left central almost inner derivation}, if there exists $x \in L$ such that the map $(D-L_x): L \rightarrow Ann_r(L)$;
%%
%%\textbf{left central almost inner derivation}, if there exists $x \in L$ such that the map $(D-L_x): L \rightarrow Ann_l(L)$.
%
%\end{definition}
%
%Since we study the right Leibniz algebras, then we consider the right multiplication and further the definition \ref{def5} is not necessary.

\

\section{Main results}

\subsection{The properties of almost inner derivations of the Leibniz algebras}

The subspaces $Inner(L), \ CAID(L), \ RCAID(L), \ AID(L), \ Der(L)$ are  Lie subalgebras with $[D,D']=DD'-D'D$.
\begin{proposition}
We have the following inclusions of Lie subalgebras
$$Inner(L) \subseteq CAID(L) \subseteq RCAID(L) \subseteq AID(L) \subseteq Der(L).$$
\end{proposition}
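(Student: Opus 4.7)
The plan is to treat the proposition as two coupled claims: the chain of set-theoretic inclusions, and the assertion that each of the five subspaces is a Lie subalgebra of $\End(L)$ under the commutator. The inclusions are almost entirely definitional; the closure under the commutator is where the real work lies.

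For the chain of inclusions I would dispatch them in order of increasing triviality from the top. $AID(L) \subseteq Der(L)$, as well as $RCAID(L), CAID(L) \subseteq AID(L)$, are built into the definitions. The inclusion $CAID(L) \subseteq RCAID(L)$ reduces to the observation $Cent(L) = Ann_r(L) \cap Ann_l(L) \subseteq Ann_r(L)$, so that the same witness $x$ that works for membership in $CAID(L)$ also works for $RCAID(L)$. For $Inner(L) \subseteq CAID(L)$, given $D = R_x$ I note that $R_x(y) = [y,x] \in [y,L]$ shows $R_x \in AID(L)$, and then $R_x - R_x = 0$ trivially takes values in $Cent(L)$, so $R_x \in CAID(L)$.

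For the Lie subalgebra claims, closure of $Der(L)$ under the commutator is the standard verification valid for any non-associative algebra. Closure of $Inner(L)$ follows from a short calculation: $[R_x, R_y](z) = [[z,y],x] - [[z,x],y]$, which equals $[z,[y,x]] = R_{[y,x]}(z)$ by the Leibniz identity, hence $[R_x, R_y] = R_{[y,x]} \in Inner(L)$. For $AID(L)$, given $D, D' \in AID(L)$ with $D(y) = [y, a_y]$ and $D'(y) = [y, b_y]$, I would expand
\[
[D, D'](y) = D([y, b_y]) - D'([y, a_y])
\]
by the derivation rule; two of the four resulting summands combine through the Leibniz identity into $[y, [a_y, b_y]]$, and the remaining two already sit in $[y, L]$, yielding
\[
[D, D'](y) = \bigl[y,\ [a_y, b_y] + D(b_y) - D'(a_y)\bigr] \in [y, L].
\]

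The most delicate point, and the main obstacle, is closure of $RCAID(L)$ and $CAID(L)$. I would first verify that $Ann_r(L)$ and $Cent(L)$ are characteristic ideals of $L$: for $a \in Ann_r(L)$ the identity $[y, D(a)] = D([y,a]) - [D(y), a] = 0$ shows $D(a) \in Ann_r(L)$, and a symmetric argument handles $Cent(L)$. Writing $D = R_{x_D} + \delta$ and $D' = R_{x_{D'}} + \delta'$ with $\delta, \delta'$ taking values in the appropriate ideal $I$, I would expand $[D, D']$ into four pieces, recognize $[R_{x_D}, R_{x_{D'}}] = R_{[x_{D'}, x_D]}$ from the $Inner$ case, and use that $I$ is a two-sided ideal stable under every derivation to absorb the three cross-terms $[\delta, \delta']$, $[\delta, R_{x_{D'}}]$, $[R_{x_D}, \delta']$ into $I$. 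This shows $[x_{D'}, x_D]$ is a valid witness for $[D, D']$, giving closure; the linchpin throughout is precisely the ideal/stability property of $Ann_r(L)$ and $Cent(L)$ under derivations.
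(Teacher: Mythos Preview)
Your proposal is correct and follows essentially the same route as the paper: the $AID(L)$ closure computation is identical, and for $CAID(L)$ and $RCAID(L)$ the paper expands $[C_1 - R_{y_1}, C_2 - R_{y_2}]$ via the identity $[D, R_x] = R_{D(x)}$, which is the same decomposition you carry out by writing $D = R_{x_D} + \delta$ and invoking the characteristic--ideal property of $Ann_r(L)$ and $Cent(L)$. Your handling of $Inner(L) \subseteq CAID(L)$ (choosing the witness $x$ so that $R_x - R_x = 0$) is in fact cleaner than the paper's.
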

\begin{proof}
Let $D_1, D_2 \in AID(L)$ and $x\in L$. Then there exist $y_1,y_2 \in L$ such that $D_1(x)=[x, y_1], \ D_2(x)=[x, y_2]$. Using the property of the derivation and the Leibniz identity, we get the following
$$[D_1,D_2](x)=(D_1D_2)(x)-(D_2D_1)(x)=[D_1(x),y_2]+[x,D_1(y_2)]-[D_2(x),y_1]-[x,D_2(y)]=$$
$$[[x,y_1],y_2]-[[x,y_2],y_1]+[x,D_1(y_2)]-[x,D_2(y_1)]=$$
$$=[x,[y_1,y_2]]+[x,D_1(y_2)]-[x,D_2(y_1)]=[x, [y_1,y_2]+D_1(y_2)-D_2(y_1)].$$

Therefore, $[D_1,D_2](x)=[x,[y_1,y_2]+D_1(y_2)-D_2(y_1)] \in [x,L],$ we have $[D_1,D_2] \in AID(L).$

Let $C_1,C_2 \in CAID(L)$. Then there exist $y_1,y_2 \in L$ such that $C_1-R_{y_1}$ and $C_2-R_{y_2}$ are maps from $L$ to $Cent(L)$. We consider $[C,R_x]=R_{C(x)}$ for $C \in Der(L)$ and obtain the following
$$[C_1-R_{y_1}, C_2-R_{y_2}]=[C_1,C_2]-[C_1,R_{y_2}]-[R_{y_1},C_2]+[R_{y_1},R_{y_2}]=$$
$$=[C_1,C_2]-R_{C_1(y_2)}+R_{C_2(y_1)}-R_{[y_2,y_1]}=[C_1,C_2]-(R_{C_1(y_2)}-R_{C_2(y_1)}+R_{[y_2,y_1]}).$$
Hence we have that the linear transformation $[C_1,C_2]-(R_{C_1(y_2)}-R_{C_2(y_1)}+R_{[y_2,y_1]})$ maps $L$ to $Cent(L)$. Hence $[C_1,C_2] \in CAID(L).$

Let $D_1,D_2 \in RCAID(L)$. Then there exist $y_1,y_2 \in L$ such that $D_1-R_{y_1}$ and $D_2-R_{y_2}$ are maps $L$ to $Ann_r(L)$. We consider $[D,R_x]=R_{D(x)}$ for $D \in Der(L)$ and obtain the following
$$\begin{array}{lll}
[D_1-R_{y_1},
D_2-R_{y_2}]=[D_1,D_2]-[D_1,R_{y_2}]-[R_{y_1},D_2]+[R_{y_1},R_{y_2}]=\\
=[D_1,D_2]-R_{D_1(y_2)}+R_{D_2(y_1)}-R_{[y_2,y_1]}=[D_1,D_2]-(R_{D_1(y_2)}-R_{D_2(y_1)}+R_{[y_2,y_1]}).
\end{array}$$
Hence we have that the linear transformation $[D_1, D_2]-(R_{D_1(y_2)} - R_{D_2(y_1)} + R_{[y_2, y_1]})$ maps $L$ to $Ann_r(L)$. Hence $[D_1, D_2] \in RCAID (L).$
%%%%%%%%%%%%%%%%%%%%%%%%555

Now let us show that $Inner(L) \subseteq CAID(L).$ Let $R_x,R_y \in Inner(L)$ and $R_x-R_y: L \rightarrow Cent(L)$. For every $z \in L, \ a \in Cent(L)$ we consider the following
$$(R_x-R_y)(z)=[z,x]-[z,y]=[z,x]-[z,a+x]=[z,a]\in Cent(L).$$
Therefore, $Inner(L) \subseteq CAID(L).$
%%%%%%%%%%%%%%%%%%%%%%%%%
\end{proof}

\begin{proposition}
The subalgebra $RCAID(L)$ is a Lie ideal in $AID(L)$.
\end{proposition}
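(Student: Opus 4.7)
The plan is to show that if $D_1 \in RCAID(L)$ with witness $y_1 \in L$ (so $D_1 - R_{y_1}$ maps $L$ into $Ann_r(L)$) and $D_2 \in AID(L)$, then $[D_1, D_2] \in RCAID(L)$, with the natural candidate witness being $z = -D_2(y_1)$. In other words, I aim to prove that $[D_1, D_2] + R_{D_2(y_1)}$ sends $L$ into $Ann_r(L)$.

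The key preliminary observation is that $Ann_r(L)$ is stable under the right-multiplication action of $L$, i.e.\ $[Ann_r(L), L] \subseteq Ann_r(L)$. This is a short consequence of the Leibniz identity: for $a \in Ann_r(L)$ and $y, z \in L$,
$$[z, [a, y]] = [[z, a], y] - [[z, y], a] = 0,$$
since $[z, a] = 0$ and $[\cdot, a] = 0$ on all of $L$. This lemma is the main technical ingredient; it is what ultimately forces the second summand of the commutator below to land in $Ann_r(L)$.

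Next, using the identity $[D, R_y] = R_{D(y)}$ for $D \in Der(L)$ (established in the preceding proof), one rewrites
$$[D_1, D_2] + R_{D_2(y_1)} = [D_1, D_2] - [R_{y_1}, D_2] = [D_1 - R_{y_1}, D_2].$$
Setting $f := D_1 - R_{y_1}$, which by hypothesis maps $L$ into $Ann_r(L)$, it remains to verify that $[f, D_2]$ maps $L$ into $Ann_r(L)$. For $x \in L$,
$$[f, D_2](x) = f(D_2(x)) - D_2(f(x)).$$
The first term is in $Ann_r(L)$ directly. For the second, $f(x) \in Ann_r(L)$, and since $D_2 \in AID(L)$ there is $a \in L$ with $D_2(f(x)) = [f(x), a]$, which lies in $Ann_r(L)$ by the lemma.

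Combining these, $[D_1, D_2] + R_{D_2(y_1)}$ sends $L$ into $Ann_r(L)$, so $[D_1, D_2] \in RCAID(L)$, proving the ideal property. The step requiring the most care is the stability lemma for $Ann_r(L)$, since without it $D_2(f(x))$ would not obviously return to $Ann_r(L)$; everything else is a routine application of the commutator formula $[D, R_y] = R_{D(y)}$ already in use in the paper.
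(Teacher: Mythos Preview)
Your proof is correct and follows essentially the same route as the paper: both reduce $[D_1,D_2]$ (up to an inner derivation $R_{D_2(y_1)}$) to the commutator $[f,D_2]$ with $f=D_1-R_{y_1}$, then argue that each term $f(D_2(x))$ and $D_2(f(x))$ lies in $Ann_r(L)$. The only difference is cosmetic---the paper swaps the order of the two arguments in the bracket and simply asserts that an almost inner derivation preserves $Ann_r(L)$, whereas you supply the short Leibniz-identity justification $[Ann_r(L),L]\subseteq Ann_r(L)$ explicitly.
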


\begin{proof}
Let $C \in RCAID(L)$ and $D \in AID(L)$. We must show $[D, C] \in RCAID(L)$. We already know $[D, C] \in AID(L)$. We fix an element $x \in L$ such that $C ': = C-R_x$ maps $L$ to $Ann_r(L)$. We denote $D':=[D, C] -R_{D(x)}$. Then from $[D,R_x]=R_{D(x)}$ we obtain
$$[D,C']=[D,C-R_x]=[D,C]-[D,R_x]=[D,C]-R_{D(x)}=D'$$
and $D'$ maps $L$ to $Ann_r(L)$. Hence for all $y \in L$ we have
$$D'(y)=[D,C'](y)=D(C'(y))-C'(D(y)),$$
because $C'$ maps $L$ to $Ann_r(L)$ and $D$ maps $Ann_r(L)$ to $Ann_r(L)$.
\end{proof}

\begin{proposition}
Let $L$ be the Leibniz algebra. Then the followings are true:

1) Let $D\in AID(L)$. Then $D(L)\subseteq [L, L], \ D(Cent(L))=0$ and $D(I) \subseteq I$ for every ideal $I$ of $L$.

2) For $D \in CAID(L)$, there exists an $x \in L$ such that $D_{|[L, L]} = {R_x} _ {| [L, L]}$.

3) If $L$ has nilindex $3$, then $CAID(L)=AID(L)$.

4) If $Cent(L)=0$, then $CAID(L)=Inner(D)$.

5) If $L$ is nilpotent, then $AID(L)$ is nilpotent.

6) $AID(L \oplus L')=AID(L) \oplus AID(L')$.

\end{proposition}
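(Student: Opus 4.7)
The plan is to dispatch the six assertions essentially by unpacking the definitions, using parts (1) and (2) as the workhorses for (3)--(5), and reducing (6) to a splitting argument.

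For (1), I would fix $D \in AID(L)$ and pick, for each $x \in L$, an element $a_x$ with $D(x) = [x, a_x]$. The inclusion $D(L) \subseteq [L, L]$ is then immediate; for $x \in Cent(L) \subseteq Ann_l(L)$ the bracket $[x, a_x]$ vanishes, giving $D(Cent(L)) = 0$; and if $I$ is an ideal, $[x, a_x] \in [I, L] \subseteq I$ shows $D(I) \subseteq I$. For (2), I would note that $D - R_x$ is itself a derivation (since $R_x$ is), and by hypothesis takes values in $Cent(L)$. Applying it to $[y, z]$ via the Leibniz derivation rule yields $[(D-R_x)(y), z] + [y, (D-R_x)(z)]$; each summand vanishes because one factor already lies in $Cent(L) \subseteq Ann_l(L) \cap Ann_r(L)$. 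Hence $(D - R_x)|_{[L, L]} = 0$.

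For (3), the key observation is that $L^3 = 0$ together with the Leibniz identity $[x, [y, z]] = [[x, y], z] - [[x, z], y]$ forces not only $[[L, L], L] = 0$ but also $[L, [L, L]] = 0$, so $[L, L] \subseteq Cent(L)$. Combined with $D(L) \subseteq [L, L]$ from (1), any $D \in AID(L)$ maps $L$ into $Cent(L)$; taking $x = 0$ in the definition of $CAID(L)$ gives $D \in CAID(L)$. For (4), the inclusion $Inner(L) \subseteq CAID(L)$ is already part of Proposition 3.1.1, while conversely, if $D \in CAID(L)$ with $D - R_x$ mapping $L$ into $Cent(L) = 0$, then $D = R_x$ is inner.

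The main work, and the step I expect to be the principal obstacle, is (5). I would first strengthen (1) by showing that $D(L^k) \subseteq L^{k+1}$ for every $D \in AID(L)$ and every $k$: for $x \in L^k$, the element $D(x) = [x, a_x]$ lies in $[L^k, L] = L^{k+1}$. If $L^s = 0$, iterating gives $D_1 D_2 \cdots D_{s-1}(L) = 0$ for any $D_i \in AID(L)$; in particular every element of $AID(L)$ is a nilpotent endomorphism of $L$. Since $AID(L)$ is already known to be a Lie subalgebra of $\End(L)$, Engel's theorem (in its Lie-subalgebra-of-nilpotent-endomorphisms form) applies and yields that $AID(L)$ is a nilpotent Lie algebra. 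The delicate point here is verifying the nilpotency hypothesis uniformly via the filtration shift and correctly invoking Engel in the Leibniz setting (where $L$ itself is not a Lie algebra, but $AID(L)$ still acts linearly and $\End(L)$ is an associative algebra to which Engel applies as usual).

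Finally for (6), I would show that every $D \in AID(L \oplus L')$ preserves the two summands. For $x \in L$ viewed as $(x, 0)$, writing $D(x, 0) = [(x, 0), (a_1, a_2)] = ([x, a_1], 0)$ uses the vanishing of the cross brackets in a direct sum of algebras; similarly $D$ preserves $L'$. Thus $D$ splits as $D|_L \oplus D|_{L'}$, each restriction lying in $AID$ of its respective summand (with witnesses inherited from the $a_x$ of $D$). Conversely, a block-diagonal assembly of almost inner derivations is manifestly almost inner on $L \oplus L'$, and the two subspaces intersect trivially inside $AID(L \oplus L')$ since they act nontrivially only on disjoint subspaces, giving the claimed internal direct sum decomposition.
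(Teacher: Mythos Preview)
Your proposal is correct and follows essentially the same approach as the paper's own proof: each part is handled by unpacking definitions, with (3) reduced to $[L,L]\subseteq Cent(L)$, (4) to $D-R_x=0$, (5) to the filtration shift $D(L^k)\subseteq L^{k+1}$ plus Engel, and (6) to the block-diagonal splitting. Your argument for (5) is in fact slightly sharper than the paper's, since you observe that any product $D_1\cdots D_{s-1}$ vanishes (not just powers of a single $D$), and you invoke the classical Engel theorem for the Lie subalgebra $AID(L)\subseteq \End(L)$ rather than the Leibniz-algebra version cited in the paper, which is all that is actually needed.
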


\begin {proof}
1) By definition, almost inner derivations of $L$ maps to $[L,L]$ and $Cent(L)$ to 0.

Let $x\in I$. Then we have $D(x) \in [x,L] \subseteq [I,L]\subseteq I$.

2) For a given $D\in CAID(L)$, there exists $x \in L$ such that $D'=D-R_x$ satisfies $D'(L) \subseteq Cent(L)$. Hence $D'$ is derivation and for all $u,v \in L$ we have
$$D'([u,v])=[D'(u),v]+[u,D'(v)]=0.$$

3) If $L$ is nilpotent with nilindex 3, i.e. $L^3=0$, then for each $D\in AID(L)$ we get $D(L) \subseteq [L, L] \subseteq Cent(L)$ and get equality.

4) We suppose $Cent(L)=0$ and $D\in CAID(L)$. Then there is $x \in L$ such that $D-R_x=0$. Therefore $D$ is inner.

5) Let $D \in AID(L)$ and $x \in L$. Then $D^k(x) \in [[[..., [x, L], ... L], L]$ ($k$ times $L$). If $k$ is higher than nilpotent
class over $L$, then we have $D^k(x)=0$, therefore $D$ is nilpotent. By Engel's theorem for Leibniz algebras \cite{Ayupov1}, $AID(L)$ is nilpotent.

6) Let $D \in AID(L \oplus L')$. Then the constraints are again almost inner derivations, i.e. $D_{| L} \in AID(L)$ and
$D_ {| L '} \in AID(L')$. It is obvious that the mapping $D \mapsto D_ {| L} \oplus D_ {|L'}$ gives a one-to-one correspondence between
$AID(L \oplus L')$ and $AID(L) \oplus AID(L')$.

\end{proof}

\subsection{Almost inner derivations of null-filiform Leibniz algebras}

Firstly we consider a certain class of nilpotent Leibniz algebras, the so-called null-filiform Leibniz algebra \cite{Ayupov3}.

In any $n$-dimensional null-filiform Leibniz algebra $L$ there exists a basis $\{e_1, e_2, ..., e_n \}$ such that the multiplication in $L$ has the form:
\begin{equation}\label{multip null-fil}
NF_n: \quad [e_i, e_1]=e_{i+1}, \quad 1\leq i\leq n-1
\end{equation}
(the omitted of products are equal to zero).

Let $L$ be a null-filiform Leibniz algebra.
\begin{proposition}
For the $n$-dimensional null-filiform Leibniz algebra $NF_n$ the following equality holds:
$$AID(NF_n)=Inner(NF_n).$$
\end{proposition}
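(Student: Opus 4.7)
The plan is to exploit the fact that in $NF_n$, the right multiplications are extremely constrained: since the only nonzero products among basis vectors have the form $[e_i,e_1]=e_{i+1}$, the subspace $[e_i,L]$ is at most one-dimensional.

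First I would compute, for each basis vector, the subspace $[e_i,L]$. A direct inspection of the multiplication table \eqref{multip null-fil} yields $[e_i,L]=\spa\{e_{i+1}\}$ for $1\le i\le n-1$ and $[e_n,L]=0$. Hence, given any $D\in AID(NF_n)$, the defining condition $D(x)\in[x,L]$ applied to $x=e_i$ forces
\[
D(e_i)=\alpha_i e_{i+1}\quad (1\le i\le n-1),\qquad D(e_n)=0,
\]
for some scalars $\alpha_1,\ldots,\alpha_{n-1}\in F$.

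Next I would use the fact that $D$ is a derivation to tie these scalars together. Applying $D$ to the relation $[e_i,e_1]=e_{i+1}$ gives
\[
D(e_{i+1})=[D(e_i),e_1]+[e_i,D(e_1)]=\alpha_i[e_{i+1},e_1]+\alpha_1[e_i,e_2]=\alpha_i e_{i+2}
\]
for $1\le i\le n-2$ (the second term vanishes because only products with $e_1$ on the right are nonzero). Comparing with $D(e_{i+1})=\alpha_{i+1}e_{i+2}$ gives $\alpha_{i+1}=\alpha_i$, so all $\alpha_i$ equal a single scalar $\alpha$; the boundary case $i=n-1$ is automatic since $[e_n,e_1]=0$, which is consistent with $D(e_n)=0$.

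Finally, setting $x=\alpha e_1$, one checks immediately that $R_x(e_i)=[e_i,\alpha e_1]=\alpha e_{i+1}$ for $i\le n-1$ and $R_x(e_n)=0$, so $D=R_{\alpha e_1}\in Inner(NF_n)$. Combined with the inclusion $Inner(NF_n)\subseteq AID(NF_n)$ from the earlier proposition, this yields the desired equality. There is no serious obstacle here: the whole argument rests on the observation that $[e_i,L]$ is one-dimensional, which reduces an a priori highly underdetermined system to a single parameter.
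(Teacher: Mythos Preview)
Your proof is correct and uses essentially the same idea as the paper: both exploit the derivation rule on $e_{i+1}=[e_i,e_1]$ to reduce everything to the value of $D$ at $e_1$. The only cosmetic difference is that the paper subtracts the inner derivation $R_{a_{e_1}}$ at the outset (so that $D'(e_1)=0$ and then $D'(e_i)=0$ by induction), whereas you first pin down $D(e_i)=\alpha_i e_{i+1}$ from the AID condition on each basis vector and then show the $\alpha_i$ coincide.
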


\begin{proof} The null-filiform algebra $L$ is a one-generated algebra, i.e. generated by $e_1$. Let $D\in AID(NF_n)$. Then, by the definition of almost inner derivation, there exists $a_{e_1}$ such that $D(e_1)=R_{a_{e_1}}.$ Let $D'\in AID(NF_n)$ and let $D'=D-R_{a_{e_1}}$, then we get $D'(e_1)=0$.
Then by multiplication (\ref{multip null-fil}) we have
$$D'(e_i)=D'([e_{i-1},e_1])=[D'(e_{i-1},e_1)]+[e_{i-1},D'(e_1)]=0, \ \ 2\leq i \leq n.$$
This means that $$AID(NF_n)=Inner(NF_n).$$

\end{proof}

\subsection{Almost Inner Derivation of Non-Lie Filiform Leibniz Algebras}
%%%%%%%%%%%%%%%%%%%%%%%%%%%%%%%%%%%%%%%%%%%%%%%%%%%%%%%%%%%%%%%%%%%%%%%%%%%%%%%%%55
Now we consider filiform non-Lie Leibniz algebras $F_1(\alpha_4, \alpha_5, ..., \alpha_n, \theta)$ and $F_2(\beta_5, ..., \beta_n, \gamma)$ from \cite{Ayupov3}:

$F_1(\alpha_4, \alpha_5, ..., \alpha_n, \theta) : \  \left\{%
\begin{array}{lll}
[e_1,e_1]=e_3, &  \\ [1mm]

[e_i,e_1]=e_{i+1}, \quad  2\leq i \leq n-1, \\ [1mm]

[e_1,e_2]=\sum\limits_{s=4}^{n-1}\alpha_s e_s + \theta e_n,&  \\ [2mm]

[e_j,e_2]=\sum\limits_{s=4}^{n-j+2}\alpha_s e_{s+j-2},  \ \ 2\leq j \leq n-2, &  \\
\end{array}%
\right.$

\

$F_2(\beta_4, \beta_5, ..., \beta_n, \gamma) : \  \left\{%
\begin{array}{lll}
[e_1,e_1]=e_3, &  \\ [1mm]

[e_i,e_1]=e_{i+1}, \quad  3\leq i \leq n-1, \\ [1mm]

[e_1,e_2]=\sum\limits_{k=4}^{n}\beta_k e_k, &
\\ [1mm]

[e_2,e_2]=\gamma e_n, \\ [1mm]

[e_i,e_2]=\sum\limits_{k=4}^{n+2-i}\beta_k e_{k+i-2}, \ \ 3\leq i \leq n-2. &  \\
\end{array}%
\right.$\\

Let $L$ be an algebra from $F_1(\alpha_4, \alpha_5, ..., \alpha_n, \theta)$ or $F_2(\beta_4, \beta_5, ..., \beta_n, \gamma)$.

Let $L$ be the Leibniz algebra and $E_{n,2}: L \rightarrow L$ be a linear mapping such that
\begin{equation}\label{E(n,2)}
E_{n,2}(e_i)=\delta_{i,2}e_n, \ \ 1\leq i \leq n,
\end{equation}
where $\delta_{i,2}=\left\{
                    \begin{array}{ll}
                      1, & i=2 \\
                      0, & i\neq 2
                    \end{array}
                  \right.
$ -- Kronecker symbol.

\begin{theorem}\label{th F1F2}
Let $L$ be a non-Lie filiform Leibniz algebra and let $D\in AID(L)$. Then there exist an element $x\in L$ and $\lambda\in\mathbb{C}$ such that
$$D-R_x=\lambda E_{n,2}.$$
\end{theorem}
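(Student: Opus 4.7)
My plan is to exploit the fact that both $F_1$ and $F_2$ are generated as algebras by $e_1$ and $e_2$, so any derivation $D \in Der(L)$ is determined by its values on these two generators. Moreover, since $[e_i, e_j] = 0$ for every $j \geq 3$ in both families, the subspaces $[e_1, L]$ and $[e_2, L]$ are each at most two-dimensional, and consequently the inner derivation $R_x$ depends only on the coefficients of $e_1, e_2$ in $x$. Using $D \in AID(L)$, I would write $D(e_1) = [e_1, a]$ for some $a = \sum_i a_i e_i$, set $x := a_1 e_1 + a_2 e_2$, and put $D' := D - R_x$. Then $D' \in AID(L)$ (since $Inner(L) \subseteq AID(L)$ and this is a subalgebra), $D'(e_1) = 0$ by construction, and applying $D'$ to $e_3 = [e_1, e_1]$ gives $D'(e_3) = 0$.

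The next step is to propagate along $e_4, \dots, e_n$ using $[e_i, e_1] = e_{i+1}$: since $D'(e_1) = 0$, induction yields $D'(e_{i+1}) = [D'(e_i), e_1]$, and starting from $D'(e_3) = 0$ one gets $D'(e_i) = 0$ for all $i \geq 3$. In $F_1$ there is the additional relation $[e_2, e_1] = e_3$, which forces $[D'(e_2), e_1] = D'(e_3) = 0$. Writing $D'(e_2) \in [e_2, L] = \spa(e_3, \sum_{s=4}^{n} \alpha_s e_s)$ in the form $\mu e_3 + \nu \sum_{s=4}^{n} \alpha_s e_s$ and expanding $[D'(e_2), e_1]$, this constraint forces $\mu = 0$ and $\nu \alpha_s = 0$ for every $s = 4, \dots, n-1$; in either branch of the resulting case split, $D'(e_2) = \lambda e_n$ for some scalar $\lambda$. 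In $F_2$ the relation $[e_2, e_1] = 0$ together with $[e_2, e_j] = 0$ for $j \geq 3$ gives $[e_2, L] = \spa(\gamma e_n)$, so $D'(e_2) = \lambda e_n$ is immediate.

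Assembling these, $D'(e_i) = 0$ for every $i \neq 2$ and $D'(e_2) = \lambda e_n$, which is precisely the description of $\lambda E_{n,2}$, so $D - R_x = \lambda E_{n,2}$. The main technical hurdle I anticipate is the case analysis inside $F_1$, where the extra relation $[e_2, e_1] = e_3$ (absent in $F_2$) is what drives the constraint collapsing $D'(e_2)$ to a multiple of $e_n$, and one has to keep careful track of whether $\nu$ or the various $\alpha_s$ must vanish in order to see that $\lambda e_n$ really does lie in $[e_2, L]$ in every subcase. Once $D'(e_2)$ is pinned down, the remaining identities $D'(e_i) = 0$ for $i \geq 4$ follow routinely from the same induction and the derivation property.
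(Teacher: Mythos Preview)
Your proposal is correct and follows essentially the same approach as the paper's proof: subtract an inner derivation so that $D'(e_1)=0$, propagate via $e_{i+1}=[e_i,e_1]$ to obtain $D'(e_i)=0$ for $i\ge 3$, and then pin down $D'(e_2)$. The only difference is a minor streamlining: the paper first writes $D'(e_2)=\sum_j b_j e_j$ generically and uses both $D'([e_2,e_1])=0$ and $D'([e_1,e_2])=0$ before invoking the AID condition $D'(e_2)\in [e_2,L]$, whereas you invoke $D'(e_2)\in [e_2,L]=\spa\bigl(e_3,\sum_{s=4}^n\alpha_s e_s\bigr)$ up front and then use only $[D'(e_2),e_1]=0$, which already suffices to force $D'(e_2)=\lambda e_n$.
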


\begin{proof}

We first consider the non-Lie filiform Leibniz algebra $L=F_1(\alpha_4, \alpha_5, ..., \alpha_n, \theta).$

Let $D\in AID(L)$. This algebra is a two-generated algebra, i.e. we have generators $e_1$ and $e_2$. Then, by the definition of almost inner derivation, there exists $a_{e_1}$ such that $D(e_1)=R_{a_{e_1}}$. Let $D'\in AID(L)$ and $D'=D-R_{a_{e_1}}$, then we get $D'(e_1)=0$.
Since $D'(e_1)=0$, then we have the following:
$$\begin{array}{ll}
D'(e_3)=D'([e_1,e_1])=[D'(e_1),e_1]+[e_1, D'(e_1)]=0, \\
D'(e_{i})=D'([e_{i-1},e_1])=[D'(e_{i-1}),e_1]+[e_{i-1},D'(e_1)]=[D'(e_{i-1}),e_1]=0, \ \ 4\leq i \leq n.
\end{array}$$
Let $D'(e_2)=\sum\limits_{j=1}^n b_{j}e_j$. we check the following:
$$D'(e_3)=D'([e_2,e_1])=[D'(e_2),e_1]=[\sum\limits_{j=1}^n b_{j}e_j,e_1]=(b_{1}+b_{2})e_3+b_{3}e_4+...+b_{n-1}e_n.$$
On the other hand, $D'(e_3)=D([e_1,e_1])=0$.
So we get $$b_{1}=-b_{2}, \ \ b_{i}=0, \ 3\leq i\leq n-1.$$
Now we check the following:
$$\begin{array}{lll}
0&=&D'([e_1,e_2])=[D'(e_1),e_2]+[e_1,D'(e_2)]=[e_1, b_{1}e_1-b_{1}e_2+b_{n}e_n]=\\
&=&b_{1}e_3-b_{1}(\alpha_4 e_4 +...+\alpha_{n-1}e_{n-1}+\theta e_n).
\end{array}$$
We have $b_{1}=0$ and $D'(e_2)=b_{n}e_n$. On the other hand, by definition of almost inner derivation
$$b_{n}e_n=D'(e_2)=[e_2, a_{e_2}]=[e_2, a_{2,1}e_1+a_{2,2}e_2+...+a_{2,n}e_n]=a_{2,1}e_3+a_{2,2}(\alpha_4e_4+\alpha_5e_5+...+\alpha_ne_n).$$ We obtain
\begin{equation}\label{D(e2)=a22alphan}
\left\{
\begin{array}{ll}
a_{2,1}=0, \ \ a_{2,2}\alpha_i=0, \ 4\leq i \leq n-1, \\
b_n=a_{2,2}\alpha_n.
\end{array}\right.
\end{equation}
Hence $D'(e_2)=a_{2,2}\alpha_n e_n$. If $a_{2,2}\alpha_n=0$, then $AID(L)=Inner(L)$, so
$$a_{2,2}\alpha_n\neq0,$$
therefore from (\ref{D(e2)=a22alphan}) we get
$$\alpha_i=0, \ 4\leq i \leq n-1.$$
In the end we obtain $D'=a_{2,2}\alpha_n E_{n,2}=\lambda E_{n,2}$.

\

Let $L=F_2(\beta_4, \beta_5, ..., \beta_n, \gamma)$ and $D' \in AID(L)$. By definition AID for $e_2$ there exists $a_{e_2}$ such that
$$D'(e_2)=[e_2,a_{e_2}]=[e_2,a_{2,1}e_1+...+a_{2,n}]=a_{2,2}\gamma e_n.$$

Conducting analogously reasoning in this algebra we obtain $D'(e_1)=0,$ $D'(e_i)=0, \ 3\leq i\leq n$ and $D'=a_{2,2}\gamma E_{n,2}=\lambda E_{n,2}$, where $\lambda\in\mathbb{C}$.

Now we consider the following equality:
$$
\begin{array}{lll}
a_{2,2}\gamma e_n&=&D'(e_1)+D'(e_2)=D'(e_1+e_2)=[e_1+e_2, c_{e_1+e_2}]=[e_1+e_2, c_1e_1+c_2e_2]=\\ &=&c_1e_3+c_2\beta_4e_4+c_2(\beta_4+\beta_5)e_6+...+c_2(\beta_4+...+\beta_{n-1})e_{n-1}+c_2(\beta_4+...+\beta_{n-1}+\beta_n+\gamma)e_n.
\end{array}$$
We get
$$\left\{
\begin{array}{lll}
c_1=0,\\
c_2\beta_i=0, \ 4\leq i \leq n-1, \\
c_2(\beta_n+\gamma)=a_{2,2}\gamma.
\end{array}\right.
$$

If at least one of $\beta_{i_0}\neq0 \ (4\leq i_0\leq n-1)$, then we have $c_2=0$, hence $AID(L)=Inner(L)$. Therefore $\beta_i=0, \ 4\leq i\leq n-1$.

Thus, for filiform non-Lie algebras we obtain $D-R_a=\lambda E_{n,2}, \ \lambda\in\mathbb{C}.$
\end{proof}

\begin{remark}\label{rem1} Let $L$ be a filiform non-Lie Leibniz algebra. If at least one of $\alpha_{i_0}\neq0$ and $\beta_{j_0}\neq 0,$ $i_0, j_0\in\{4,5,...,n-1\}$, then we get $AID(L)=Inner(L).$
\end{remark}
%%%%%%%%%%%%%%%%%%%%
\begin{theorem}\label{AIDF1F2}
Let $L$ be an $n$-dimensional filiform non-Lie Leibniz algebra $F_1(0, ...,0, \alpha_n, \theta)$ or $F_2(0, ...,0, \beta_n, \theta)$. Then at run $\theta=0,\ \alpha_n\neq 0$ and $\beta_n=0, \ \gamma\neq0$ respectively we obtain
$$AID(L)=Inner(L) \oplus \langle E_{n,2} \rangle,$$
where $E_{n,2}$ is the matrix of the elements in which in the place $(n,2)$ we have 1, and other elements are 0.
\end{theorem}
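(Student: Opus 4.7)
The strategy is to obtain one inclusion immediately from Theorem~\ref{th F1F2} and to establish the other, together with directness of the sum, by verifying that $E_{n,2}$ is itself an almost inner derivation that does not already lie in $Inner(L)$ under the hypotheses of the theorem.

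The inclusion $AID(L) \subseteq Inner(L) + \mathbb{C}\, E_{n,2}$ is immediate from Theorem~\ref{th F1F2}, since any $D \in AID(L)$ decomposes as $D = R_x + \lambda E_{n,2}$ for some $x \in L$ and $\lambda \in \mathbb{C}$. The reverse containment reduces to showing $E_{n,2} \in AID(L)$. I would first verify that $E_{n,2}$ is a derivation by a bracket-by-bracket check on the basis: in both $F_1(0,\ldots,0,\alpha_n,0)$ and $F_2(0,\ldots,0,0,\gamma)$ every structure constant produces a vector in $\mathrm{span}\{e_3,\ldots,e_n\}$, so $E_{n,2}([e_i,e_j])=0$, while the right-hand side of the Leibniz identity collapses to brackets involving $e_n$, which vanish because $e_n$ sits in both the right and left annihilator of $L$. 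For the almost-inner condition I would exhibit the universal witness $a = \alpha_n^{-1} e_2$ (respectively $\gamma^{-1} e_2$): under the hypotheses, the only nonvanishing bracket of the form $[e_i,e_2]$ is $[e_2,e_2]=\alpha_n e_n$ (resp.\ $\gamma e_n$), so for $x = \sum c_i e_i$ one gets $[x,a] = c_2 e_n = E_{n,2}(x)$, and hence $E_{n,2} \in AID(L)$.

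The step I expect to be the main obstacle is showing $E_{n,2} \notin Inner(L)$, which is what promotes the sum to a direct sum. I would argue by contradiction, supposing $E_{n,2} = R_a$ for some $a = \sum d_i e_i \in L$ and then reading off the basis equations $R_a(e_j) = 0$ for $j \neq 2$ together with $R_a(e_2) = e_n$. Each such equation is a linear condition on the coordinates $d_i$, and the task is to exhibit an incompatibility, exploiting the very restricted set of products in $F_1$ and $F_2$ that can produce $e_n$. Once $Inner(L) \cap \mathbb{C}\, E_{n,2} = 0$ is in hand, the decomposition $AID(L) = Inner(L) \oplus \langle E_{n,2} \rangle$ follows from the two containments, finishing the proof.
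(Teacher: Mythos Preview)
Your verification that $E_{n,2}\in AID(L)$ is essentially the paper's own: they too take $c_x=\alpha_n^{-1}e_2$ (respectively $\gamma^{-1}e_2$), independent of $x$, and check $E_{n,2}(x)=[x,c_x]$.  But notice what your ``universal witness'' argument actually establishes: if one and the same element $a$ satisfies $E_{n,2}(x)=[x,a]$ for every $x$, then by definition $E_{n,2}=R_a$ is an \emph{inner} derivation.  Concretely, under the hypotheses $\theta=0$, $\alpha_n\neq 0$ (resp.\ $\beta_n=0$, $\gamma\neq 0$) the only nontrivial right multiplication by $e_2$ is $[e_2,e_2]=\alpha_n e_n$ (resp.\ $\gamma e_n$), so $E_{n,2}=R_{\alpha_n^{-1}e_2}$ (resp.\ $R_{\gamma^{-1}e_2}$).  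Hence $Inner(L)\cap\langle E_{n,2}\rangle=\langle E_{n,2}\rangle$, not $0$, and your planned Step~4 cannot succeed.

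This is not a gap in your reasoning so much as a defect in the formulation you are trying to establish.  The paper's proof never attempts to verify that the sum is direct; it only checks that $E_{n,2}\in AID(L)$ under the stated parameter restrictions (and, going beyond the statement, shows that $E_{n,2}\notin AID(L)$ when $\theta\neq 0$, resp.\ $\beta_n\neq 0$, thereby motivating those restrictions).  Your very computation exposes that the ``$\oplus$'' cannot be a genuine internal direct sum here: the equality $AID(L)=Inner(L)+\langle E_{n,2}\rangle$ in fact collapses to $AID(L)=Inner(L)$ in the cases covered by the theorem.  So you should either match the paper by proving only the non-direct sum (equivalently, $E_{n,2}\in AID(L)$) and omitting the directness claim, or flag the inconsistency explicitly.
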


\begin{proof}
Let $L=F_1(0, ...,0, \alpha_n, \theta)$. We have to show that $E_{n,2}$ is an almost inner derivation of the algebra $L$. We take the element $x=\sum \limits_{i=1}^nx_i e_i \in L$, then there is $c_x=c_1e_1+c_2e_2\in L$ and we check up the following
$$\begin{array}{lll}
E_{n,2}(x)&=&[x,c_x]=[\sum\limits_{i=1}^n x_i e_i, c_1 e_1+c_2 e_2]=\\
&=&c_1(x_1+x_2)e_3 + c_1x_3e_4 + c_1x_4e_5+...+c_1x_{n-2}e_{n-1}+(c_1x_{n-1}+c_2(x_1\theta+x_2\alpha_n))e_n.
\end{array}$$

If $\theta\neq0$ and $x_3\neq0$, then for $x_1=-\frac{x_2\alpha_n}{\theta}$ the map $E_{n,2}$ is not almost inner derivation.

Therefore $\theta=0$ and for any $x\in L$ choosing $c_1=0, \ c_2=\frac{1}{\alpha_n}$ we have $E_{n,2}(x)=x_2e_n$. Hence $E_{n,2}\in AID(L)$.
\

Let $L=F_2(0, 0, ...,0, \beta_n, \gamma)$. Let $\forall x=\sum \limits_{i=1}^nx_i e_i \in L$, then $\exists c_x=c_1e_1+c_2e_2\in L$ and we obtain the following
$$\begin{array}{lll}
E_{n,2}(x)&=&[x,c_x]=[\sum\limits_{i=1}^n x_i e_i, c_1 e_1+c_2 e_2]=\\
&=&c_1x_1e_3 + c_1x_3e_4 + c_1x_4e_5+...+c_1x_{n-2}e_{n-1}+(c_1x_{n-1}+c_2(x_1\beta_n+x_2\gamma))e_n.
\end{array}$$

If $\beta_n\neq0$ and $x_4\neq0$, then for $x_1=-\frac{x_2\gamma}{\beta_n}$ the derivation $E_{n,2}$ is not almost inner derivation.

Therefore $\beta_n=0$ and for any $x\in L$ choosing $c_1=0, \ c_2=\frac{1}{\gamma}$ we have $E_{n,2}(x)=x_2e_n$. Hence $E_{n,2}\in AID(L)$.

\end{proof}

Theorem \ref{th F1F2} and \ref{AIDF1F2} imply the following consequence:

\begin{corollary}
In filiform non-Lie Leibniz algebras, if all parameters are equal to zero, then these algebras turn into a graded algebra. Then the almost inner derivations of graded non-Lie Leibniz algebras coincide with the inner derivations.
\end{corollary}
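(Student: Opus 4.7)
The plan is to derive this corollary directly from Theorem \ref{th F1F2} together with a one-line test on the single basis vector $e_2$. First I would observe that when every parameter $\alpha_4,\ldots,\alpha_n,\theta$ (respectively $\beta_4,\ldots,\beta_n,\gamma$) vanishes, the multiplication tables of $F_1$ and $F_2$ collapse to $[e_1,e_1]=e_3$ and $[e_i,e_1]=e_{i+1}$ on the allowed ranges, with every bracket involving $e_2$ on the right side vanishing. Assigning degree $1$ to both $e_1$ and $e_2$ and degree $j-1$ to $e_j$ for $j\geq 3$ makes the surviving products homogeneous, so $L$ coincides with its associated graded algebra $grL$; this gives the first assertion.

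For the second assertion, suppose $D\in AID(L)$. By Theorem \ref{th F1F2} there exist $x\in L$ and $\lambda\in\mathbb{C}$ with $D = R_x + \lambda E_{n,2}$, so it suffices to force $\lambda=0$. I would evaluate on $e_2$: since $D(e_2)\in[e_2,L]$ and $R_x(e_2)=[e_2,x]\in[e_2,L]$, subtraction yields $\lambda e_n \in [e_2,L]$. A direct look at the multiplication tables with every parameter zero shows $[e_2,L]=\langle e_3\rangle$ in $F_1$ (contributed by $[e_2,e_1]=e_3$) and $[e_2,L]=0$ in $F_2$; in neither case does this space contain $e_n$. Hence $\lambda=0$ and $D=R_x\in Inner(L)$. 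The reverse containment $Inner(L)\subseteq AID(L)$ is tautological.

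The principal analytic work was already carried out in Theorem \ref{th F1F2}, so there is no substantial obstacle here; the only care needed is separating the two families $F_1, F_2$ when computing $[e_2,L]$, and in each case this is a one-step verification.
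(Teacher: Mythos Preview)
Your argument is correct and is essentially the route the paper has in mind: the paper records no proof but states that the corollary follows from Theorem~\ref{th F1F2} together with Theorem~\ref{AIDF1F2}, and your single-vector test on $e_2$ is exactly the computation underlying Theorem~\ref{AIDF1F2} specialized to the case where $\alpha_n$ (respectively $\gamma$) vanishes. The only cosmetic difference is that instead of invoking Theorem~\ref{AIDF1F2} as a black box, you re-derive the relevant piece directly by computing $[e_2,L]$, which is slightly more self-contained.
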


%%%%%%%%%%%%%%%%%%%%%%%%%%%%%%%%% Third class%%%%%%%%%%%%%%%%%%%%%%%%%%%%%%%%%%%%%%%%

\subsection{Almost Inner Derivations of Some Filiform Leibniz Algebras}

We consider filiform Leibniz algebra $L=F_3(\theta_1, \theta_2, \theta_3)$, which contain filiform Lie algebra \cite{Bratzlavsky}:
$$F_3(\theta_1, \theta_2, \theta_3) : \  \left\{%
\begin{array}{llllll}
[e_1,e_1]=\theta_1e_n, \quad [e_1,e_2]=-e_3+\theta_2 e_n, \quad [e_2,e_2]=\theta_3 e_n, \\ [1mm]
[e_i,e_1]=e_{i+1}, \quad  2\leq i \leq n-1, \\ [1mm]
[e_1,e_i]=-e_{i+1}, \quad  3\leq i \leq n-1, \\ [1mm]
[e_i,e_2]=-[e_2,e_i]=\sum\limits_{k=5}^{n-i+3}\beta_k e_{k+i-3}, \ \  3\leq i \leq n-2, \\ [1mm]
[e_i,e_j]=-[e_j,e_i]=0,  \ \ i, j \geq 3. &  \\
\end{array}%
\right.$$

\begin{theorem}\label{th F3}
Let $L=F_3(\theta_1, \theta_2, \theta_3)$ and let $D\in AID(L)$. Then there exist an element $x\in L$ and $\lambda\in\mathbb{C}$ such that
$$D-R_x=\lambda E_{n,2}.$$
\end{theorem}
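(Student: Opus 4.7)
The plan is to adapt the two-generator argument from Theorem~\ref{th F1F2}. The algebra $L=F_3(\theta_1,\theta_2,\theta_3)$ is generated by $e_1$ and $e_2$ (the relation $[e_k,e_1]=e_{k+1}$ for $2\leq k\leq n-1$ propagates to all higher basis vectors), and a direct inspection of the multiplication table shows $e_n\in Cent(L)$, so every $D\in AID(L)$ annihilates $e_n$. Given $D\in AID(L)$, I would fix $a_{e_1}\in L$ with $D(e_1)=[e_1,a_{e_1}]$ and pass to $D':=D-R_{a_{e_1}}\in AID(L)$, which satisfies $D'(e_1)=0$. Writing $D'(e_2)=\sum_{j=1}^{n}b_je_j$, the Leibniz rule together with $D'(e_1)=0$ yields the recurrence $D'(e_{k+1})=[D'(e_k),e_1]$ for $k\geq 2$, which expresses each $D'(e_k)$ ($3\leq k\leq n$) as an explicit linear combination of the $b_j$'s.

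I would next use the almost-inner conditions $D'(e_k)\in[e_k,L]$ to constrain the $b_j$'s. Centrality of $e_n$ forces $D'(e_n)=0$, and tracking the top coefficient of the recurrence gives $b_2=0$. The containment $D'(e_2)\in[e_2,L]\subseteq\Span\{e_3,e_5,e_6,\ldots,e_n\}$, read off from $[e_2,e_1]=e_3$, $[e_2,e_2]=\theta_3e_n$, and $[e_2,e_j]=-\sum_{k=5}^{n-j+3}\beta_ke_{k+j-3}$, forces $b_1=b_4=0$. Applying the almost-inner condition on $e_3$, where $[e_3,L]=\Span\{e_4,\,\beta_5e_5+\beta_6e_6+\cdots+\beta_ne_n\}$ has dimension at most two, and comparing against the formula for $D'(e_3)$ coming from the recurrence, pins down $b_5,\ldots,b_{n-1}$; a short case split organized by the first nonzero $\beta_k$ is needed, but in every sub-case the resulting linear system has only the trivial solution. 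At this stage $D'(e_2)=b_3e_3+b_ne_n$ and $D'(e_k)=b_3e_{k+1}$ for $3\leq k\leq n-1$.

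Finally, to absorb the leftover $b_3$ contribution without disturbing $D'(e_1)=0$, I would subtract the inner derivation $b_3R_{e_1+\theta_1e_{n-1}}$: the element $e_1+\theta_1e_{n-1}$ is chosen so that $[e_1,e_1+\theta_1e_{n-1}]=\theta_1e_n+\theta_1\cdot(-e_n)=0$, while $[e_k,e_1+\theta_1e_{n-1}]=e_{k+1}$ for $2\leq k\leq n-1$ (since $[e_k,e_{n-1}]=0$ for $k\geq 2$). This cancels the $b_3e_{k+1}$ terms on $e_2,\ldots,e_{n-1}$ and leaves $b_ne_n$ at $e_2$ with zero elsewhere; thus $D-R_x=\lambda E_{n,2}$ with $x=a_{e_1}+b_3(e_1+\theta_1e_{n-1})$ and $\lambda=b_n$. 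The main obstacle is the middle paragraph: the coefficients of $D'(e_3)$ couple nontrivially with the structure constants $\beta_k$, and one must verify carefully in each parameter regime that no residual freedom in $b_5,\ldots,b_{n-1}$ survives.
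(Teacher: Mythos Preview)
Your argument is correct, but it proceeds differently from the paper's proof. The paper argues by induction on $n$: an almost inner derivation $D$ descends to $\bar D$ on the quotient $L/\langle e_n\rangle$, which is again an algebra of type $F_3$ in dimension $n-1$; by the induction hypothesis one may arrange $\bar D=\mu E_{n-1,2}$ after subtracting an inner derivation, so that $D(e_1)=\alpha e_n$ and $D(e_2)=\mu e_{n-1}+\lambda e_n$. A further correction by $R_{\alpha e_{n-1}}$ kills $D(e_1)$, and then the single pair of constraints $D'(e_3)\in[e_3,L]$ and $D'(e_2)\in[e_2,L]$ forces $\mu=0$ via a short contradiction. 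Your approach instead works directly in dimension $n$: you expand $D'(e_2)=\sum b_je_j$, propagate via $D'(e_{k+1})=[D'(e_k),e_1]$, and use centrality of $e_n$ plus the containments $D'(e_2)\in[e_2,L]$ and $D'(e_3)\in[e_3,L]$ to kill $b_1,b_2,b_4,\dots,b_{n-1}$; the residual $b_3$-term is then absorbed by your well-chosen inner derivation $R_{e_1+\theta_1 e_{n-1}}$. The case split you flag is genuinely short: if all $\beta_k$ vanish then $[e_2,L]\subseteq\Span\{e_3,e_n\}$ already gives $b_4=\dots=b_{n-1}=0$; otherwise, letting $\beta_m$ be the first nonzero parameter, the $e_2$-condition yields $b_4=\dots=b_{m-1}=0$, and then the $e_m$-coefficient in the $e_3$-condition forces the auxiliary scalar to vanish, whence $b_m=\dots=b_{n-1}=0$. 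Your route avoids setting up and checking the induction (in particular that $L/\langle e_n\rangle$ is again of the same type), at the cost of this extra bookkeeping; the paper's route is slicker once the inductive framework is in place but leaves that verification implicit.
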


\begin{proof}

Let $L=F_3(\theta_1, \theta_2, \theta_3)$. Let $D\in AID(L)$. Then $D$ induces an almost inner derivation of $\bar{D}$ by $L / \langle e_n \rangle$. By induction, we can assume that after changing $D$ to inner derivation, we have $\bar{D}=\mu E_{n-1,2}$ for some $\mu \in \mathbb{C}$. This implies such that $D(e_1)=\alpha e_n$ for some $\alpha \in \mathbb{C}$. Now we replace $D$ with $D'=D+R_{\alpha e_{n-1}}$. Then we have
$$\begin{array}{ll}
D'(e_1)=D(e_1)+R_{\alpha e_{n-1}}(e_1)=\alpha e_n + [e_1,\alpha e_{n-1}]=0, \\
D'(e_i)=D(e_i)+[e_i,\alpha e_{n-1}]=D(e_i), \ i\geq 2.
\end{array}$$
We get
$$D'(e_2)=D(e_2)=\mu e_{n-1}+\lambda e_n, \ \ \mu,\lambda\in \mathbb{C}.$$
Hence, we have the following
$$\begin{array}{l}
D'(e_3)=D'([e_2,e_1])=[D'(e_2),e_1]=[\mu e_{n-1}+\lambda e_n,e_1]=\mu e_n,\\
D'(e_4)=D'([e_3,e_1])=[D'(e_3),e_1]=[\mu e_n,e_1]=0,
\end{array}$$
moreover, $D'(e_i)=0, \ i\geq 5$.

Since we have $D'(e_3)=\mu e_n$ and $D'\in AID(L)$, then there exists an element $a_{e_3}=a_{3,1}e_1+a_{3,2}e_2\in L$ such that $D'(e_3)=[e_3,a_{e_3}]=\mu e_n$. Therefore we get the following
$$\mu e_n=[e_3,a_{3,1}e_1+a_{3,2}e_2]=a_{3,1}e_4+a_{3,2}(\beta_5 e_5 + \beta_6 e_6 +...+\beta_n e_n).$$
We obtain
$$a_{3,1}=0, \ \ a_{3,2}\beta_i=0, \ \ 5\leq i \leq n-1, \ \ a_{3,2}\beta_n=\mu. $$
Since we assume $\mu \neq0$, then we have
$$\beta_i=0, \ \ 5\leq i \leq n-1.$$
Now we consider the following
$$D'(e_2)=[e_2,a_{e_2}]=[e_2,\sum\limits_{j=1}^n a_{2,j}e_j]=a_{2,1}e_3+(a_{2,2}\theta_3-a_{2,3}\beta_n)e_n.$$
On the other hand $D'(e_2)=\mu e_{n-1}+\lambda e_n$. We have
$$a_{2,1}e_3+(a_{2,2}\theta_3-a_{2,3}\beta_n)e_n=\mu e_{n-1}+\lambda e_n.$$
Since we assume that $\mu \neq 0$, this equation does not have a solution, which is a contradiction.
Hence indeed $\mu=0$, and therefore $D'= \lambda E_{2,n}$.
\end{proof}

\

\begin{proposition}\label{AIDF3}
Let $L$ be an $n$-dimensional filiform Leibniz algebra $F_3(\theta_1, \theta_2, \theta_3).$ Then
$$AID(F_3(\theta_1, \theta_2, \theta_3))=Inner(F_3(\theta_1, \theta_2, \theta_3))\oplus \langle E_{n,2} \rangle,$$
where $E_{n,2}$ is the matrix of the elements in which in the place $(n,2)$ we have 1, and other elements are 0.
\end{proposition}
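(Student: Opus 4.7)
The plan is to combine Theorem \ref{th F3} with direct verifications that $E_{n,2}$ lies in $AID(L)$ but not in $Inner(L)$. The upper bound $AID(L) \subseteq Inner(L) + \langle E_{n,2} \rangle$ is immediate from Theorem \ref{th F3}: for any $D \in AID(L)$, the theorem supplies $x \in L$ and $\lambda \in \mathbb{C}$ with $D = R_x + \lambda E_{n,2}$, and $R_x \in Inner(L)$. The opposite inclusion uses $Inner(L) \subseteq AID(L)$ (trivial) together with $E_{n,2} \in AID(L)$, which is the content of the next step.

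To show $E_{n,2} \in AID(L)$, I would first check that $E_{n,2}$ is a derivation by evaluating the Leibniz identity on every basic pair $(e_i,e_j)$. Since $E_{n,2}(e_i) = \delta_{i,2} e_n$, both sides of the Leibniz relation vanish on $[e_i,e_j]$ whenever $e_2$ does not occur on either side; when $e_2$ does occur, the right-hand side reduces to one of $[e_n,e_j]$ or $[e_j,e_n]$, all of which vanish by direct inspection of the multiplication table (the index $n$ falls outside every stated range, and $[e_i,e_j]=0$ for $i,j\ge 3$). Next, for each $x = \sum x_i e_i$ I would produce $a_x \in L$ with $[x,a_x] = x_2 e_n$, using the ansatz $a_x = c_1 e_1 + c_2 e_2$ in the spirit of Theorem \ref{AIDF1F2}. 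Comparing the coefficients of $e_3,\dots,e_n$ in $[x, c_1 e_1 + c_2 e_2]$ yields a small linear system in $c_1,c_2$ whose entries depend on $x$ and on the structure constants $\theta_1,\theta_2,\theta_3,\beta_5,\dots,\beta_n$. This is the main obstacle: one must verify the system is solvable for every $x$, which I expect to handle by separating the cases $x_1=0$ and $x_1\ne 0$, taking $c_1=0$ when possible, and exploiting the presence of a nonzero $\theta_3$ or $\beta_k$ to produce the required $e_n$-component from $[e_2,\cdot]$ while killing the coefficients of $e_3,\dots,e_{n-1}$.

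Finally, to obtain the directness of the sum, I would show $E_{n,2} \notin Inner(L)$. Suppose $E_{n,2} = R_y$ for some $y = \sum y_j e_j \in L$. Expanding $[e_1, y] = -y_2 e_3 - y_3 e_4 - \cdots - y_{n-2} e_{n-1} + (y_1\theta_1 + y_2\theta_2 - y_{n-1}) e_n$ and setting it equal to $E_{n,2}(e_1)=0$ forces $y_2 = y_3 = \cdots = y_{n-2} = 0$. Then $[e_2,y] = y_1[e_2,e_1] + y_{n-1}[e_2,e_{n-1}] + y_n[e_2,e_n] = y_1 e_3$ (since $[e_2,e_{n-1}] = [e_2,e_n] = 0$), which cannot equal $e_n = E_{n,2}(e_2)$. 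Hence $E_{n,2} \notin Inner(L)$, so $Inner(L) \cap \langle E_{n,2}\rangle = 0$ and the decomposition $AID(L) = Inner(L) \oplus \langle E_{n,2} \rangle$ follows.
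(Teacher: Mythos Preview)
Your overall structure is sound: Theorem~\ref{th F3} gives the inclusion $AID(L)\subseteq Inner(L)+\langle E_{n,2}\rangle$, and your argument that $E_{n,2}\notin Inner(L)$ is correct. The paper's own proof consists of a single reference to Proposition~7.4 of \cite{Burde}, so your outline is already more detailed than what the paper supplies.

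However, the step you flag as ``the main obstacle'' is a genuine gap, and it cannot be closed in the generality stated. Take all parameters equal to zero: $\theta_1=\theta_2=\theta_3=0$ and $\beta_5=\cdots=\beta_n=0$. Then for $x=e_2$ one has $[e_2,e_1]=e_3$, $[e_2,e_2]=0$, and $[e_2,e_i]=0$ for $i\ge 3$, so that $[e_2,L]=\langle e_3\rangle$. But $E_{n,2}(e_2)=e_n\notin\langle e_3\rangle$ for $n\ge 4$, hence $E_{n,2}\notin AID(L)$ in this case. Your plan to ``exploit the presence of a nonzero $\theta_3$ or $\beta_k$'' therefore cannot succeed without some hypothesis on the parameters, exactly as happened in Theorem~\ref{AIDF1F2} for the classes $F_1$ and $F_2$ (where the conditions $\theta=0,\ \alpha_n\neq 0$ and $\beta_n=0,\ \gamma\neq 0$ had to be imposed). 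The proposition as written, and the analogue in \cite{Burde} it invokes, should be read with such an implicit restriction; in your write-up you should make the needed condition explicit and then verify solvability of your linear system in $c_1,c_2$ under that hypothesis.
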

\begin{proof}
The proof is analogous to Proposition 7.4 in \cite{Burde}.

\end{proof}

%%%%%%%%%%%%%%%%%%%%% Example %%%%%%%%%%%%%%%%%%%%%%%%%%%%%%%%%%%%%%%%%

\subsection{Almost inner derivations of low dimensional nilpotent Leibniz algebras}

\

N.Jacobson proved the following theorem \cite{Jacobson2}:

\begin{theorem}\label{th-other-der}
Every nilpotent Lie algebra has a derivation $D$ which is not inner.
\end{theorem}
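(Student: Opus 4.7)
The plan is to argue by induction on $\dim L$, splitting the inductive step according to whether $Cent(L)$ is contained in $L^2 = [L,L]$.

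For the base case, a one-dimensional nilpotent Lie algebra is abelian, so $Inner(L)=0$ while any nonzero linear endomorphism is a derivation; this already produces an outer derivation. More generally, whenever $L$ is abelian of positive dimension, every linear map $L\to L$ is a derivation while $Inner(L)=0$, so any nonzero linear map serves.

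For the inductive step, first suppose $Cent(L)\not\subseteq L^2$. Pick $z\in Cent(L)\setminus L^2$ and a linear functional $\phi\in L^*$ with $\phi(L^2)=0$ and $\phi(z)\neq 0$; such a $\phi$ exists because the image of $z$ in the nonzero quotient $L/L^2$ is nonzero. Define $D\colon L\to L$ by $D(x)=\phi(x)z$. Then $D$ is a derivation, since
\[
D([x,y])=\phi([x,y])\,z=0\qquad\text{(as $[x,y]\in L^2$)},
\]
while
\[
[D(x),y]+[x,D(y)]=\phi(x)[z,y]+\phi(y)[x,z]=0\qquad\text{(as $z$ is central)}.
\]
If $D$ were equal to an inner derivation $R_w$ for some $w\in L$, then $D(z)=R_w(z)=[z,w]=0$, contradicting $D(z)=\phi(z)z\neq 0$. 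Hence $D$ is outer.

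The main obstacle is the remaining case $Cent(L)\subseteq L^2$. Here one picks a one-dimensional ideal $U\subseteq Cent(L)$, which exists because $L$ nilpotent and nonzero forces $Cent(L)\neq 0$. Then $L/U$ is nilpotent of strictly smaller dimension, and the induction hypothesis yields an outer derivation $\overline{D}$ of $L/U$. The delicate step is to lift $\overline{D}$ to a derivation $D$ of $L$ and to verify that no such lift can be inner: the obstruction to lifting vanishes because $U$ is central and one-dimensional (any set-theoretic lift differs from a derivation by a $1$-cochain into the trivial module $U$, and the cocycle condition holds automatically), while the standing hypothesis $Cent(L)\subseteq L^2$ is precisely what prevents correcting the lifted map by an element of $Inner(L)$ to make it inner. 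For the complete details of this classical argument we refer to \cite{Jacobson2}.
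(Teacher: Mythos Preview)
The paper does not prove this theorem at all: it merely states the result, attributes it to Jacobson, and cites \cite{Jacobson2}. There is therefore no proof in the paper to compare against, and your proposal already supplies strictly more than the paper does. Your treatment of the case $Cent(L)\not\subseteq L^2$ is complete and correct, and in the remaining case you, like the paper, ultimately defer to \cite{Jacobson2}.

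That said, be aware that the sketch you give for the case $Cent(L)\subseteq L^2$ is not a proof and contains assertions that are not automatic. Lifting a derivation $\overline{D}$ of $L/U$ to a derivation of $L$ is a genuine problem: if $\tilde{D}$ is any linear lift, the defect $\omega(x,y)=\tilde{D}([x,y])-[\tilde{D}(x),y]-[x,\tilde{D}(y)]$ lands in $U$, and one needs $\omega$ to be expressible as $-\mu([x,y])$ for some linear $\mu:L\to U$; centrality and one-dimensionality of $U$ do not by themselves force this. Likewise, the claim that the hypothesis $Cent(L)\subseteq L^2$ ``prevents correcting the lifted map by an element of $Inner(L)$'' is asserted rather than argued. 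Since you explicitly send the reader to \cite{Jacobson2} for these details, this matches the paper's level of rigor, but you should not present the inductive lifting step as if it were routine.
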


There is a question: Are almost inner derivations of nilpotent Lie algebras outer derivations? And is the converse right? Generally this question is open. We give an example which answers in the positive on this question.

\begin{example}\label{AidOth}
We consider 5-dimensional nilpotent Lie algebra in which there exist almost inner derivations which are not inner \cite{Burde}.
\end{example}

1) $\mathfrak{g}_{5,3}: \ [e_1,e_2]=e_4, \ [e_1,e_4]=e_5, \ [e_2,e_3]=e_5$, the omitted products are equal to zero. Derivations, inner derivations and almost inner derivations of this algebra have the following matrix forms respectively:
$$Der(\mathfrak{g}_{5,3})=
\left(
  \begin{array}{ccccc}
    a_{1,1} & 0       & 0        & 0                & 0\\
    a_{1,2} & a_{2,2} & 0        & 0                & 0\\
    a_{1,3} & a_{2,3} & 2a_{1,1} & 0                & 0\\
    a_{1,4} & a_{2,4} & -a_{2,2} & a_{1,1}+a_{2,2}  & 0\\
    a_{1,5} & a_{2,5} & a_{3,5}  & -a_{1,3}+a_{2,4} & 2a_{1,1}+a_{2,2}
  \end{array}
\right),
$$
$$
Inner(\mathfrak{g}_{5,3})=
\left(
  \begin{array}{ccccc}
    0     & 0      & 0       & 0     & 0 \\
    0     & 0      & 0       & 0     & 0 \\
    0     & 0      & 0       & 0     & 0 \\
    \mu_2 & -\mu_1 & 0       & 0     & 0 \\
    \mu_4 & \mu_3  & -\mu_2  &-\mu_1 & 0
  \end{array}
\right), \
AID(\mathfrak{g}_{5,3})=
\left(
  \begin{array}{ccccc}
    0       & 0       & 0        & 0       & 0\\
    0       & 0       & 0        & 0       & 0\\
    0       & 0       & 0        & 0       & 0\\
    a_{1,4} & a_{2,4} & 0        & 0       & 0\\
    a_{1,5} & a_{2,5} & a_{3,5}  & a_{2,4} & 0
  \end{array}
\right).
$$
If $a_{1,4}=a_{1,5}=a_{2,4}=a_{2,5}=0$, then we obtain the matrix of outer derivation of algebra $\mathfrak{g}_{5,3}$:
$$Outer(\mathfrak{g}_{5,3})=
\left(
  \begin{array}{ccccc}
    a_{1,1} & 0       & 0        & 0                & 0\\
    a_{1,2} & a_{2,2} & 0        & 0                & 0\\
    a_{1,3} & a_{2,3} & 2a_{1,1} & 0                & 0\\
    0       & 0       & -a_{2,2} & a_{1,1}+a_{2,2}  & 0\\
    0       & 0       & a_{3,5}  & -a_{1,3}         & 2a_{1,1}+a_{2,2}
  \end{array}
\right).
$$
Therefore, $AID(\mathfrak{g}_{5,3})\subseteq Outer(\mathfrak{g}_{5,3})$ and any almost inner derivation of the algebra $\mathfrak{g}_{5,3}$ is outer. If in $Outer(\mathfrak{g}_{5,3})$ we have $a_{1,1}=a_{1,2}=a_{1,3}=a_{2,2}=a_{2,3}=0$, then the space of all outer derivations coincides with the space of all almost inner derivations.

\

Now we give examples for low dimensional nilpotent Leibniz algebras.

\begin{example}\label{ex1}
Let $L$ be the three-dimensional nilpotent Leibniz algebra:
$$
\begin{array}{lll}
  L_1(\alpha):& [e_2,e_2]=e_1, \quad [e_3,e_3]=\alpha e_1, \quad [e_2,e_3]=e_1, \quad \alpha\in\mathbb{C},  \\
  L_2:&  [e_2,e_2]=e_1, \quad [e_3,e_2]=e_1, \quad [e_2,e_3]=e_1,  \\
  L_3:&  [e_2,e_2]=e_1, \quad [e_3,e_3]=e_1, \quad [e_3,e_2]=e_1, \quad [e_2,e_3]=e_1,  \\
  L_4:&  [e_3,e_3]=e_1, \\
  L_5:&  [e_2,e_3]=e_1, \quad [e_3,e_3]=e_1, \\
  L_6:&  [e_3,e_3]=e_1, \quad [e_1,e_3]=e_2.
\end{array}
$$
For three-dimensional nilpotent Leibniz algebras $L$, the following equality
$$AID(L)=Inner(L)$$
holds.
\end{example}

\begin{example}\label{ex2}
Let $L$ be four-dimensional nilpotent Leibniz algebra. Then from \cite{Albev} there are 28 algebras and we give only those algebras which will be necessary to us:

$
\begin{array}{lllll}
  L_4:    & [e_1,e_1]=e_3, & [e_1,e_2]=\alpha e_4,                    & [e_2,e_1]=e_3, & [e_2,e_2]=e_4,  \\
          &[e_3,e_1]=e_4, &  \alpha\in\{0,1\};\\
  L_9:    & [e_1,e_1]=e_4, & [e_2,e_1]=e_3,                           & [e_2,e_2]=e_4, & [e_1,e_2]=-e_3+2e_4, \\
          & [e_3,e_1]=e_4, & [e_1,e_3]=-e_4, \\
  L_{10}: & [e_1,e_1]=e_4, & [e_2,e_1]=e_3,                           & [e_2,e_2]=e_4, & [e_3,e_1]=e_4, \\
          &[e_1,e_2]=-e_3, & [e_1,e_3]=-e_4; \\
  L_{11}: & [e_1,e_1]=e_4, & [e_1,e_2]=e_3,                           & [e_2,e_1]=-e_3, & [e_2,e_2]=-2e_3+e_4; \\
  L_{12}: & [e_1,e_1]=e_3, & [e_2,e_1]=e_4,                           & [e_2,e_2]=-e_3;   \\
  L_{13}: & [e_1,e_1]=e_3, & [e_1,e_2]=e_4,                           & [e_2,e_1]=-\alpha e_3, & [e_2,e_2]=-e_4; \\
  L_{20}: & [e_1,e_2]=e_4, & [e_2,e_1]=\frac{1+\alpha}{1-\alpha} e_4, & [e_2,e_2]= e_3, & \alpha\in\mathbb{C}\setminus \{1\}. \\
  \end{array}
$
\end{example}

Let us show the calculation of the dimension of almost inner derivations and the inner derivations of these algebras.

$\bullet$ The algebra $L_4$ is a filiform algebra from the class $F_1(0, ..., 0, \alpha_n, \theta)$. Therefore, by Theorem \ref{AIDF1F2} we have: if $\alpha=0$, then $AID(L_4)=Inner(L_4)$, and if $\alpha=1$, then $AID(L_4)=Inner(L_4) \oplus \langle E_{4,2}\rangle$.

$\bullet$ We consider the algebra $L_9$. Let $D\in AID(L_9)$, then by definition AID for $1 \leq i \leq4$ for each $e_i$ there is $a_{e_i}=\sum_{j=1}^4 a_{i,j}e_j$ and we have the following:
$$\begin{array}{ll}
D(e_1)=[e_1,a_{e_1}]=-a_{1,2}e_3+(a_{1,1}+2a_{1,2}-a_{1,3})e_4, & D(e_2)=[e_2,a_{e_2}]=a_{2,1}e_3+a_{2,2}e_4, \\
D(e_3)=[e_3,a_{e_3}]=a_{3,1}e_4, & D(e_4)=[e_4,a_{e_4}]=0.
\end{array}
$$
Since $D$ is derivation, we check the following:
$$a_{3,1}e_4=D(e_3)=D([e_2,e_1])=[D(e_2),e_1]+[e_2,D(e_1)]=a_{2,1}e_4,$$
from here we get $a_{2,1}=a_{3,1}$. Therefore, the matrix AID of this algebra has the following form:
$$AID(L_9)=
\left(
  \begin{array}{cccc}
    0        & 0       & 0       & 0 \\
    0        & 0       & 0       & 0 \\
    -a_{1,2} & a_{2,1} & 0       & 0 \\
    a_{1,1}+2a_{1,2}-a_{1,3}       & a_{2,2} & a_{2,1} & 0 \\
  \end{array}
\right),
$$
hence $dim AID(L_9)=4.$

Now we calculate the dimension of the space of inner derivations. To do this, we take the element $x=\sum\limits_{i=1}^4x_ie_i$ and consider $R_x(e_i), \ (1\leq i\leq 4):$
$$
\begin{array}{ll}
R_x(e_1)=[e_1,x]=-x_2e_3+(x_1+2x_2-x_3)e_4, & R_x(e_2)=[e_2,x]=x_1e_3+x_2e_4, \\
R_x(e_3)=[e_3,x]=x_1e_4, & R_x(e_4)=[e_4,x]=0.
\end{array}
$$
The matrix of inner derivation of algebra $ L_9 $:
$$Inner(L_9)=
\left(
  \begin{array}{cccc}
    0        & 0       & 0       & 0 \\
    0        & 0       & 0       & 0 \\
    -x_2     & x_1     & 0       & 0 \\
    x_1+2x_2-x_3 & x_2 & x_1     & 0 \\
  \end{array}
\right),
$$
hence $dim Inner(L_9)=3.$

From the matrices $AID(L_9)$ and $Inner(L_9)$ it is clear that $AID(L_9)=Inner(L_9) \oplus \langle E_{4,2} \rangle$. Now let's calculate the dimension of $RCAID(L_9)$, for this we take every element of $x=\sum\limits_{i=4}^4 x_i e_i\in L_9$ and
$$(D-R_x)(x)=\left(
  \begin{array}{cccc}
    0             & 0           & 0           & 0 \\
    0             & 0           & 0           & 0 \\
    -a_{1,2}-x_2  & a_{2,1}-x_1 & 0           & 0 \\
    a'_{1,3}-x'_3 & a_{2,2}-x_2 & a_{2,1}-x_1 & 0 \\
  \end{array}
\right)
\left(
  \begin{array}{c}
    x_1 \\
    x_2 \\
    x_3 \\
    x_4 \\
  \end{array}
\right)=\bar{0}.
$$
Then we have $a_{1,2}=x_2, \ a'_{1,3}=x'_3, \ a_{2,1}=x_1, \ a_{2,2}=x_2$. Hence, $dim RCAID(L_9)=3$.

$\bullet$ For algebras $L_{10}, \ L_{11}, \ L_{12}, \ L_{20}$ similarly conducted reasoning and calculated dimension $AID(L)$ and $Inner(L)$.

$\bullet$ Now we consider $L_{13}$ and get the following matrices:

$$
AID(L_{13})=
\left(
  \begin{array}{cccc}
    0        & 0        & 0  & 0 \\
    0        & 0        & 0  & 0 \\
    a_{1,1}  & -a_{2,1} & 0  & 0 \\
    a_{1,2}  & -a_{2,2}  & 0   & 0 \\
  \end{array}
\right), \quad
Inner(L_{13})=
\left(
  \begin{array}{cccc}
    0    & 0   & 0  & 0 \\
    0    & 0   & 0  & 0 \\
    x_1  & -x_1 & 0  & 0 \\
    x_2  & -x_2 & 0  & 0 \\
  \end{array}
\right).
$$
This shows that $dim AID(L_{13})=4, \ \ dimRCAID(L_{13}) =dim Inner(L_{13})=2$, hence we obtain $AID(L_{13})=Inner(L_{13}) \oplus \langle E_{3,2}+E_{4,2} \rangle$.

For other algebras, except those shown, almost inner derivations coincide with inner derivations.

Therefore, we have the following table:

\begin{center}
\begin{tabular}{c|c|c|c|c|c|c}
Algebra  &  dim Inner(L) & dimRCAID(L) & dimAID(L) & dimDer(L) & D \\
\hline \hline
$L_4$   & 2 & 2 & 3 & 4 & $E_{4,2}$ \\
\hline
$L_9$   & 3 & 3 & 4 & 4 & $E_{4,2}$ \\
\hline
$L_{10}$  & 3 &  3 &  4 & 4 & $E_{4,2}$ \\
\hline
$L_{11}$  & 2 & 2 & 3 & 5 & $E_{4,2}$ \\
\hline
$L_{12}$  & 2 & 2 & 3 & 5 & $E_{4,2}$ \\
\hline
$L_{13}$  & 2 & 2 & 4 & 5 & $E_{4,2}+E_{3,2}$ \\
\hline
$L_{20}$  & 2 & 2 & 3 & 7 & $E_{4,2}$ \\
\hline

\end{tabular}
\end{center}

\begin{example}\label{ex3}
Let $L$ be a complex Leibniz algebra of dimension $n \leq 2$. Then we have
$$AID(L)=RCAID(L)=Inner(L).$$

\end{example}

It is clear that for abelian Leibniz algebras $Inner(L)=RCAID(L)=AID(L)=0.$

\end{document}